\definecolor{myred}{HTML}{c20014}
\definecolor{mygreen}{HTML}{008000}
\crefname{equation}{}{}
\Crefname{equation}{Equation}{}
\providecommand{\keywords}[1]
{
  \small	
  \textbf{{Keywords.}} #1
}
\newcommand{\D}{\mathcal{D}}
\newcommand{\A}{\mathcal{A}}
\newcommand{\M}{\mathcal{M}}
\newcommand{\R}{\mathbb{R}}
\renewcommand{\S}{\mathcal{S}}
\newcommand{\on}{\omega_n}
\renewcommand{\H}{\mathcal{H}}
\renewcommand{\i}{\mathrm{i}}
\renewcommand{\d}{\mathrm{d}}
\newcommand{\eps}{\varepsilon}
\renewcommand{\L}{\mathcal{L}}
\DeclareMathOperator{\Id}{id}
\DeclareMathOperator{\re}{Re}
\DeclareMathOperator{\im}{Im}
\DeclareMathOperator{\dive}{div}
\DeclareMathOperator{\spe}{sp}
\newtheorem{theorem}{Theorem}
\newtheorem{proposition}{Proposition}
\title{Wave equation with hyperbolic boundary condition: a frequency domain approach
}
\author{Nicolas Vanspranghe\thanks{
Univ. Grenoble Alpes, CNRS, Grenoble INP, GIPSA-lab, 38000 Grenoble, France. Email: \nolinkurl{nicolas.vanspranghe@gipsa-lab.fr}.
}
}
\date{}
\begin{document}

\maketitle

\begin{abstract}
In this paper, we investigate the stability of the linear wave equation where one part of the boundary, which is seen as a lower-dimensional Riemannian manifold, is governed by a coupled wave equation, while the other part is subject to a dissipative Robin velocity feedback. We prove that the closed-loop equations generate a semi-uniformly stable semigroup of linear contractions on a suitable energy space.
Furthermore, under multiplier-related geometrical conditions, we establish a polynomial decay rate for strong solutions. This is achieved by estimating the growth of the resolvent operator on the imaginary axis.
\end{abstract}

\keywords{
Infinite-dimensional systems, control of partial differential equations, control and estimation of wave equations and systems of elasticity, semigroup and operator theory. 
}

\section{Introduction and main results}

\subsection{Background}

Let $\Omega$ be a  bounded domain of $\mathbb{R}^d$, $d\geq 2$, with smooth boundary $\Gamma = \Gamma_0 \cup \Gamma_1$. We assume that $\Gamma_0$ and $\Gamma_1$ are relatively open non-empty subsets of $\Gamma$ that satisfy $\overline{\Gamma_0} \cap \overline{\Gamma_1} = \emptyset$. We consider the following feedback system:
\begin{subequations}
\label{eq:pde-bc}
\begin{align}
\label{eq:pure-wave}
&\partial_{tt}u - \Delta u = 0 & &\mbox{in}~ \Omega \times (0, +\infty), \\
\label{eq:hbc}
&\partial_{tt}u  - \Delta_\Gamma u = - \partial_\nu u & &\mbox{on}~ \Gamma_0 \times (0, +\infty), \\
\label{eq:robin-feed}
&\partial_\nu u + u= - \alpha \partial_t u & & \mbox{on}~ \Gamma_1 \times (0, + \infty),
\end{align}
\end{subequations}
where $\alpha$ is a positive constant, $\Delta$ is the Laplacian, $\partial_\nu$ denotes the outward normal derivative, and $\Delta_\Gamma$ is the Laplace-Beltrami operator on $\Gamma$ for the metric inherited from $\R^d$ (see Subsection \ref{sec:prel} below).

The general context of this work is the analysis  of evolution equations with dynamic (or kinetic) boundary conditions. 
Those arise in physical models  where the momentum of the boundary cannot be neglected, hence the second-order (in time) dynamics.
%
An early example of such equations is given by \cite{liu1998spectral}, where energy decay of a two-dimensional (in space) acoustic flow is studied.
In our case, the coupled wave equation \cref{eq:hbc} may model boundary oscillations that propagate in the tangential directions and are caused by in-domain displacements governed by the pure wave equation \cref{eq:pure-wave}. 
 A few variations around the coupled equations \cref{eq:pure-wave}-\cref{eq:hbc} have been investigated in the literature, with \cref{eq:robin-feed} being typically replaced by a zero Dirichlet boundary condition. \cite{vitillaro2017wave} deals with local and global well-posedness of \cref{eq:pure-wave}-\cref{eq:hbc} perturbated by nonlinear potentials and damping terms acting on the domain and the boundary. In \cite{graber_analicity}, \cref{eq:pure-wave}-\cref{eq:hbc} are supplied with boundary and/or in-domain Kelvin-Voigt damping, which adds heat-like regularizing effect to the flow. 
The present article is more control-oriented and tackles the problem of boundary stabilization of \cref{eq:pure-wave}-\cref{eq:hbc} by the mean of a velocity feedback acting on $\Gamma_1$ only, as modeled by \cref{eq:robin-feed}. To the best of our knowledge, this problem has not been addressed.
Overall, what differentiates our work from the related literature is  the combination of the  two following technical challenges.
\begin{enumerate}
\item
In presence of the Laplace-Beltrami term, the boundary condition \cref{eq:hbc} is a proper (hyperbolic) partial differential equation, as opposed to \cite{liu1998spectral} or the recent article by  \cite{li_asymptotics} for instance, where no tangential derivatives appear in the dynamic boundary condition.

\item Only the anticollocated boundary $\Gamma_1$ dissipates energy; in other words, from the point of view of the dynamic boundary $\Gamma_0$, the damping is indirect and has to somehow propagate across $\Omega$. This contrasts with all the aforementioned work, where damping acts in the interior and/or the boundary subject to the second-order dynamics.
\end{enumerate}

Inspired by the literature on coupled second-order equations and in particular \cite{liu_frequency_2007}, we carry out the stability analysis of the feedback system \cref{eq:pde-bc} in the frequency domain: we investigate  pure imaginary eigenvalues (or rather, the lack thereof) and then  aim at estimating the growth of the resolvent operator on the imaginary axis.
By doing so, we are able to prove semi-uniform stability of system \cref{eq:pde-bc} and, under additional geometrical conditions, polynomial energy decay for solutions with smooth initial data. This is detailed in the next subsection.
Finally, let us also mention \cite{alabau_indirect}, where polynomial stability is established for a class of abstract coupled second-order equations; however, this result does not apply to \cref{eq:pde-bc} due to the unboundedness of the corresponding coupling operator.
In particular, the compact perturbation argument, which is often employed to prove that weakly damped systems of waves are \emph{not} uniformly stable, cannot be used, leaving the question of {exponential} stability open.
%


\textbf{Notation.} The norm of a given normed vector space $E$ is denoted by $\|\cdot \|_E$. The duality bracket $\langle \phi, x \rangle_{E}$ is used to write $\phi (x)$ for any vector $x$ in $E$ and continuous linear form $\phi$ in $E'$. If $E$ is a Hilbert space, then $(\cdot, \cdot)_E$ denotes the scalar product of $E$. If $E_1$ and $E_2$ are two Banach spaces, $\L(E_1, E_2)$ denotes the set of bounded linear operators from $E_1$ to $E_2$, which is a Banach space as well if equipped with the operator norm. Given a real number $s$, we denote by $H^s(\Omega)$ the (complex) Sobolev space of order $s$ on $\Omega$. The notation $\d x$ indicates the Lebesgue measure on $\R^d$; and $\d \sigma$ denotes the induced surface measure on $\Gamma$. Finally, $\mathcal{C}_c^\infty(\Omega)$ is the space of compactly supported and infinitely differentiable complex-valued functions on $\Omega$. In the proofs, $K$, $K'$, etc., stand for generic constants that do not depend on the variables of interest.

\subsection{Main statements}
We start by introducing the natural energy space $\H$ associated with the feedback system \cref{eq:pde-bc}. Let
 \begin{equation}
H \triangleq L^2(\Omega) \times L^2(\Gamma_0)
\end{equation}
endowed with its product Hilbertian structure, and
\begin{equation}
V \triangleq \{ (u,  \theta) \in H^1(\Omega) \times H^1(\Gamma_0) : u_{|\Gamma_0} =  \theta \}
\end{equation}
equipped with a scalar product $(\cdot, \cdot)_V$ explicitly defined below in \cref{eq:scalar-product} and equivalent to that of ${H^1(\Omega)\times H^1(\Gamma_0)}$. The set $V$ is a Hilbert space as well (see Subsection \ref{sec:prel} below). Then, we define the product Hilbert space
\begin{equation}
\H \triangleq V \times H.
\end{equation}
Our first result concerns well-posedness in $\H$ and semi-uniform stability of the system governed by \cref{eq:pde-bc}.  We start by recasting the boundary value problem \cref{eq:pde-bc} into a first-order evolution equation on $\H$ of the form $(\d /\d t)[u, v] + \A [u, v] = 0 $, where $\A : \D(\A) \to \H$ is an  unbounded linear operator  explicitly given below in \cref{eq:generator}. Solutions to \cref{eq:pde-bc} are understood in the usual linear semigroup sense: they are \emph{classical} solutions for initial data in the domain $\D(\A)$ and \emph{mild} solutions for general initial data in $\H$.
\begin{theorem}
\label{th:wp}
Solutions  to \cref{eq:pde-bc} define a strongly continuous semigroup $\{ \S_t \}$ of linear contractions on the energy space $\H$, with maximal dissipative generator $- \A$. Furthermore, $\{ \S_t\}$ is semi-uniformly stable, i.e., $\{\S_t\}$ is bounded and
\begin{equation}
\label{eq:semi-uniform}
\lim_{t \to + \infty} \|\S_t (\A + \Id)^{-1}\|_{\L(\H)} = 0.
\end{equation}
\end{theorem}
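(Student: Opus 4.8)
The plan is to treat the two assertions separately: generation of a contraction semigroup will follow from the Lumer--Phillips theorem applied to $-\A$, while the semi-uniform decay \cref{eq:semi-uniform} will come from the spectral characterisation of Batty and Duyckaerts, namely that a bounded $C_0$-semigroup whose generator is $-\A$ satisfies \cref{eq:semi-uniform} if and only if $\i\R \subset \rho(-\A)$. Thus the analytic heart of the theorem reduces to verifying dissipativity, one range condition, and the absence of spectrum on the imaginary axis.

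For generation I would first check dissipativity by computing $\re(\A[u,v],[u,v])_\H$ for $[u,v] \in \D(\A)$. Using the $V$-scalar product of \cref{eq:scalar-product} and integrating by parts (Green's formula in $\Omega$ together with integration by parts for $\Delta_\Gamma$ on $\Gamma_0$, which is a union of components of $\Gamma$ and hence a closed manifold producing no boundary term, since $\overline{\Gamma_0}\cap\overline{\Gamma_1}=\emptyset$), the interior and $\Gamma_0$ gradient contributions turn out to be purely imaginary, the normal-derivative terms on $\Gamma_0$ cancel, and substituting the feedback law \cref{eq:robin-feed} on $\Gamma_1$ eliminates the remaining cross terms, leaving the nonnegative dissipation
\begin{equation*}
\re(\A[u,v],[u,v])_\H = \alpha \int_{\Gamma_1} |v|^2 \, \d\sigma \geq 0.
\end{equation*}
Maximality amounts to solving $(\A + \Id)[u,v] = [f,g]$ in $\H$: eliminating the velocity through $v = u - f$ reduces the problem to a second-order elliptic equation for $u$ carrying the $\Delta_\Gamma$ coupling on $\Gamma_0$ and a Robin-type condition on $\Gamma_1$, whose weak formulation in $V$ I would solve by the Lax--Milgram theorem, coercivity being provided by the zeroth-order term coming from the shift and by the $\Gamma_1$ boundary term. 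Hence $-\A$ is m-dissipative and generates the contraction semigroup $\{\S_t\}$.

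For the semi-uniform estimate, boundedness is immediate from $\|\S_t\|_{\L(\H)} \leq 1$, and $(\A+\Id)^{-1}\in\L(\H)$ since $1 \in \rho(-\A)$; by the Batty--Duyckaerts characterisation it remains to establish $\i\R \subset \rho(-\A)$. I would first argue that $\A$ has compact resolvent: the output condition $\A[u,v]\in\H$ forces $v\in V$, while elliptic regularity for the coupled boundary value problem places the position component in a space strictly more regular than $V$; by Rellich's theorem on $\Omega$ and on $\Gamma_0$ the embedding $\D(\A)\hookrightarrow\H$ is then compact, so $\sigma(-\A)$ consists of isolated eigenvalues. It thus suffices to exclude imaginary eigenvalues. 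If $\A[u,v]=\i\beta[u,v]$ for some $\beta\in\R$, taking the real part against $[u,v]$ and using the dissipation identity forces $v\equiv 0$ on $\Gamma_1$; since the first component yields $v=-\i\beta u$, for $\beta\neq 0$ this gives $u=0$ on $\Gamma_1$, whence \cref{eq:robin-feed} also yields $\partial_\nu u = 0$ there. As $u$ solves the Helmholtz equation $-\Delta u = \beta^2 u$ in the connected open set $\Omega$ with vanishing Cauchy data on the nonempty open set $\Gamma_1$, unique continuation (Holmgren's theorem) gives $u\equiv 0$, hence $[u,v]=0$. The case $\beta=0$ is handled directly: testing $\Delta u = 0$ against $\bar u$ and using both boundary conditions shows that $\nabla u$ in $\Omega$, the tangential gradient $\nabla_\Gamma u$ on $\Gamma_0$, and the trace of $u$ on $\Gamma_1$ all vanish, so again $u\equiv 0$. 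This establishes $\i\R\subset\rho(-\A)$ and closes the argument.

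The main obstacle, concentrated in the second part, is twofold. First, showing that $\A$ has compact resolvent requires a genuine regularity analysis of the coupled operator, in which the Laplace--Beltrami term on $\Gamma_0$ is truly tangential and the two boundary pieces carry different conditions; one must check that sufficient smoothing occurs both near $\Gamma_0$ and near $\Gamma_1$. Second, the unique continuation step rests on propagating the vanishing Cauchy data from $\Gamma_1$ across $\Omega$, which is precisely the \emph{indirect} character of the damping highlighted in the introduction: the geometry of the configuration enters here, and this is the crux of ruling out spectrum on the imaginary axis.
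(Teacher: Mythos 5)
Your proposal is correct and follows essentially the same route as the paper's proof: Lumer--Phillips with the dissipation identity $\re(\A X, X)_\H = \alpha \int_{\Gamma_1} |v|^2 \, \d\sigma$, maximality via a Lax--Milgram variational argument, compactness of the resolvent through elliptic regularity for the coupled problem so that the spectrum reduces to eigenvalues, exclusion of imaginary eigenvalues by the same $\beta = 0$ versus $\beta \neq 0$ dichotomy ending with Holmgren unique continuation from vanishing Cauchy data on $\Gamma_1$, and finally the Batty--Duyckaerts characterisation. The only immaterial deviations are that you eliminate $v$ in favour of $u$ in the variational step where the paper does the reverse, and that the regularity analysis you flag as the main obstacle is exactly what the paper carries out in detail in its Proposition \ref{prop:max-mon} (the $H^{3/2}(\Gamma_0)$ elliptic regularity for the Laplace--Beltrami operator and a cutoff decomposition $u = u^0 + u^1$ separating the two boundary pieces, made possible by $\overline{\Gamma_0} \cap \overline{\Gamma_1} = \emptyset$).
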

The proof of Theorem \ref{th:wp} is given in Section \ref{sec:wp}.
We digress for a moment to comment on the notion of \emph{semi-uniform} stability, which has been introduced in \cite{batty-non-uniform}.
As the name suggests, it is a property that is intermediate between strong and uniform stability. Indeed, \cref{eq:semi-uniform} implies that $\{\S_t\}$ is strongly stable
and that  the decay of \emph{strong} solutions to \cref{eq:pde-bc} can be quantified as follows:
\begin{equation}
\label{eq:semi-uniform-bis}
\|\S_t [u_0, v_0]\|_\H \leq K \|\S_t(\A + \Id)^{-1}\|_{\L(\H)} \|[u_0, v_0]\|_{\D(\A)}
\end{equation}
for any initial data $[u_0, v_0]$ in $\D(\A)$ equipped with the graph norm. For more details, the reader is referred to the survey article by \cite{chill-semi}.  As an example, semi-uniform stability of a wave equation with spatially varying coefficients is investigated using spectral methods in \cite{jacob_stability}.
 Coming back to our contributions, under certain geometrical conditions, we are able to replace \cref{eq:semi-uniform-bis} with an explicit polynomial decay rate.
\begin{theorem}
\label{th:pdr}
Assume  there exists a real vector field $h$ in $\mathcal{C}^2(\overline{\Omega})$ that satisfies the following conditions:
\begin{enumerate}[label={(\alph*)}]
\item  
\label{it:jacobian} 
Denoting the Jacobian matrix of $h$ by $J_h \triangleq [\partial_j h_i]_{i j}$, there exists $\rho > 0$ such that
\begin{equation}
\re \int_\Omega [J_h f] \cdot \overline{f} \, \d x \geq \rho \|f\|^2_{L^2(\Omega)^d}
\end{equation}
for all $f$ in $L^2(\Omega)^d$;
\item
\label{it:gamma0}
On $\Gamma_0$, $h$ is parallel to the unit outward normal $\nu$, i.e., $h = (h \cdot \nu) \nu$; also, $h \cdot \nu \leq 0$;
\item
\label{it:gamma1}
 On $\Gamma_1$,  $(h \cdot \nu) \geq m$ for some $m > 0$.
\end{enumerate}
Then, the semigroup $\{ \S_t\}$ enjoys the following polynomial decay property:
there exists $C > 0$ such that for any $[u_0, v_0]$ in $\D (\A)$, for all $t\geq 0$,
\begin{equation}
\label{eq:pdr}
\|\S_t[u_0, v_0]\|_\H \leq C t^{-{1/2}} \|[u_0, v_0]\|_{\D(\A)}.
\end{equation}
%
\end{theorem}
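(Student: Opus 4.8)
The plan is to characterize the polynomial decay rate through the growth of the resolvent of $-\A$ on the imaginary axis, via the Borichev--Tomilov theorem. Since Theorem~\ref{th:wp} already provides that $\{\S_t\}$ is bounded and, as established in its proof, that the imaginary axis is contained in the resolvent set of $-\A$, the decay estimate \cref{eq:pdr} is \emph{equivalent} to the resolvent bound $\|(\i\beta+\A)^{-1}\|_{\L(\H)} = O(\beta^2)$ as $|\beta|\to+\infty$. The whole task thus reduces to proving this bound, and I would argue by contradiction: supposing it fails, there exist real numbers $\beta_n$ with $|\beta_n|\to+\infty$ and states $U_n = ([u_n,\theta_n],[p_n,q_n])\in\D(\A)$ with $\|U_n\|_\H=1$ such that, writing $F_n \triangleq (\i\beta_n+\A)U_n = ([f_n^1,f_n^2],[g_n^1,g_n^2])$, one has $\|F_n\|_\H = o(|\beta_n|^{-2})$. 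The resolvent equation then reads
\begin{equation*}
\i\beta_n u_n - p_n = f_n^1, \quad \i\beta_n\theta_n - q_n = f_n^2, \quad \i\beta_n p_n - \Delta u_n = g_n^1 ~\text{in}~\Omega, \quad \i\beta_n q_n - \Delta_\Gamma\theta_n + \partial_\nu u_n = g_n^2 ~\text{on}~\Gamma_0,
\end{equation*}
together with the Robin relation on $\Gamma_1$ encoded in $\D(\A)$. Eliminating $p_n$ produces the Helmholtz-type equation $-\Delta u_n - \beta_n^2 u_n = G_n$ with $\|G_n\|_{L^2(\Omega)}\lesssim |\beta_n|\,\|f_n^1\|_V + \|g_n^1\|_{L^2(\Omega)} = o(|\beta_n|^{-1})$.

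The estimates would be assembled in the following order. First, taking the real part of $(F_n,U_n)_\H$ isolates the dissipation and gives $\alpha\|p_n\|^2_{L^2(\Gamma_1)} = \re(F_n,U_n)_\H\le\|F_n\|_\H\to0$, so the velocity trace on $\Gamma_1$ is small; combined with $p_n = \i\beta_n u_n - f_n^1$ this also controls $\|\beta_n u_n\|_{L^2(\Gamma_1)}$. Second, I would test the Helmholtz equation with the multiplier $\overline{h\cdot\nabla u_n}$ and take real parts, producing a Rellich--Pohozaev identity in which the interior term $\re\int_\Omega(J_h\nabla u_n)\cdot\overline{\nabla u_n}\,\d x$ appears and, by hypothesis~\ref{it:jacobian}, is bounded below by $\rho\|\nabla u_n\|^2_{L^2(\Omega)}$. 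The boundary contributions are then disposed of using the geometry of $h$: on $\Gamma_0$, condition~\ref{it:gamma0} makes $h\cdot\nabla u_n = (h\cdot\nu)\partial_\nu u_n$ purely normal and gives the favorable sign $h\cdot\nu\le0$ to the normal-derivative and $\beta_n^2|u_n|^2$ terms, leaving only a tangential term proportional to $|\nabla_\Gamma\theta_n|^2$; on $\Gamma_1$, condition~\ref{it:gamma1} together with the Robin relation and the smallness of $\|p_n\|_{L^2(\Gamma_1)}$ lets the boundary terms be absorbed. Third, testing the Helmholtz equation with $\bar u_n$ yields the energy identity relating $\|\nabla u_n\|^2_{L^2(\Omega)}$ and $\beta_n^2\|u_n\|^2_{L^2(\Omega)}\approx\|p_n\|^2_{L^2(\Omega)}$, so that control of the gradient upgrades to control of the interior velocity. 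Finally, testing the $\Gamma_0$-equation against $\bar\theta_n$ links the tangential boundary energy $\|\nabla_\Gamma\theta_n\|^2$ and $\|q_n\|^2\approx\beta_n^2\|\theta_n\|^2$ to the interior through the coupling integral $\int_{\Gamma_0}\partial_\nu u_n\,\bar\theta_n$. Closing these coupled inequalities would force every component of $U_n$ to zero, contradicting $\|U_n\|_\H=1$.

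The main obstacle is the indirect nature of the damping: no dissipation acts on $\Gamma_0$, so the only way to control the second-order boundary energy $(\|\nabla_\Gamma\theta_n\|,\|q_n\|)$ is to propagate the $\Gamma_1$ dissipation through $\Omega$ via the coupling term $\partial_\nu u_n$ on $\Gamma_0$. Concretely, the interior multiplier identity leaves a tangential term $\tfrac12\int_{\Gamma_0}|h\cdot\nu|\,|\nabla_\Gamma\theta_n|^2$ with an unfavorable sign, while the boundary identity expresses $\|\nabla_\Gamma\theta_n\|^2 - \|q_n\|^2$ in terms of $\re\int_{\Gamma_0}\partial_\nu u_n\,\bar\theta_n$, so that neither family of estimates closes on its own. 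The delicate point is therefore to bound the coupling integral by the interior trace quantities already under control (with the correct powers of $\beta_n$) and to combine the two families of identities so that the boundary and interior energies are absorbed simultaneously. It is precisely this loss of two powers of $\beta_n$ in transmitting the damping across $\Omega$ that produces the resolvent growth $O(\beta^2)$, hence the decay rate $t^{-1/2}$ in \cref{eq:pdr} and not a faster one.
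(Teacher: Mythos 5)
Your skeleton is the paper's: reduce \cref{eq:pdr} to the resolvent bound of Proposition \ref{prop:resolvent-estimate} via the Borichev--Tomilov theorem, argue by contradiction with $\|X_n\|_\H = 1$ and $\|(\A + \i\on \Id)X_n\|_\H = o(\on^{-2})$, extract the dissipation on $\Gamma_1$ from $\re(\A X_n, X_n)_\H$, and run a Rellich--Pohozaev multiplier built on $h$. But the two steps that actually make the argument close are missing, and you flag the first of them yourself as ``the delicate point'' without supplying it. First, control of the coupling term $\int_{\Gamma_0}|\partial_\nu u_n|^2\,\d\sigma$: your $h$-multiplier only produces the signed quantity $\int_{\Gamma_0}(h\cdot\nu)|\partial_\nu u_n|^2\,\d\sigma$, and since $h\cdot\nu\le 0$ may vanish on parts of $\Gamma_0$, this yields no quantitative bound. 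The paper introduces a \emph{second}, auxiliary vector field $\tilde h\in\mathcal{C}^2(\overline\Omega)^d$ with $\tilde h=\nu$ on $\Gamma_0$ and $\tilde h=0$ on $\Gamma_1$ (possible precisely because $\overline{\Gamma_0}\cap\overline{\Gamma_1}=\emptyset$) and uses the trace identity \cref{eq:trace} to first get $\int_{\Gamma_0}|\partial_\nu u_n|^2 = O(1)$, see \cref{eq:est-coupling}. Without this a priori bound, neither the unfavorable tangential term $-\int_{\Gamma_0}(h\cdot\nu)\|\nabla_\Gamma u_n\|^2$ nor your coupling integral $\re\int_{\Gamma_0}\partial_\nu u_n\,\overline{u_n}\,\d\sigma$ can be estimated. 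Note also that since $h\cdot\nu$ is non-constant, the tangential term is disposed of by testing \cref{eq:wave-gamma} against the \emph{weighted} multiplier $(h\cdot\nu)\overline{u_n}$ as in \cref{eq:elliptic-gamma0}, not against $\overline{u_n}$ alone as in your outline; this produces an extra commutator term involving $\nabla_\Gamma[h\cdot\nu]$.

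Second, a single pass of ``closing the coupled inequalities'' does not close; the paper needs a bootstrap. One pass gives interior energy $o(\on^{\eta-1})$ (\cref{eq:est-interior}); feeding this back through the $\tilde h$ identity refines the coupling to $\int_{\Gamma_0}|\partial_\nu u_n|^2 = o(\on^{\eta-1})$ (\cref{eq:coupling-bis}); choosing $\eta = 1/3$ and re-running the $h$-multiplier inequality upgrades the interior energy to $o(\on^{-1})$ (\cref{eq:est-interior-bis}); and only then does trace interpolation, $\on^2\|u_n\|^2_{H^{1/2}(\Omega)}\le K\on\|u_n\|_{H^{1}(\Omega)}\|\on u_n\|_{L^2(\Omega)}$, give $\on^2\int_{\Gamma_0}|u_n|^2\,\d\sigma = o(1)$, which, re-injected into the $\Gamma_0$ equation tested with $\overline{u_n}$, finally annihilates the boundary energy $\int_{\Gamma_0}\|\nabla_\Gamma u_n\|^2 + \on^2|u_n|^2\,\d\sigma$. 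After your single pass that $\Gamma_0$ energy is still only $O(1)$ --- there is no damping on $\Gamma_0$, and the available identities control only weighted differences of its two constituents, not the energy itself --- so the contradiction with $\|X_n\|_\H = 1$ never materializes. In short: the strategy and the role of hypotheses \ref{it:jacobian}--\ref{it:gamma1} are correctly identified, but the auxiliary multiplier $\tilde h$, the weighted $\Gamma_0$ test, the $\eta$-bookkeeping with its two-pass refinement, and the $H^{1/2}$ interpolation step are exactly the technical content of the proof, and they are absent.
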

Theorem \ref{th:pdr} is proved in Section \ref{sec:pdr}.
Most of its geometrical requirements are standard when it comes to
differential multiplier analysis; we point out however that Item \ref{it:gamma0} is  a stronger than usual assumption in that we use a vector field that is perpendicular to the boundary on $\Gamma_0$.
Nevertheless, examples of such domains include ``donut-shaped" sets $\Omega$ of the form 
$
\Omega = \{ x \in \R^d : k_0 < f(x) < k_1 \}
$
where $f : \R^d \to \R$ is a smooth strictly convex function, and $k_0$ and $k_1$ are real numbers such that $k_0 < k_1$ with $k_0 > \inf_{x \in \R^d} f(x)$. In that case, $\Gamma_0$ and $\Gamma_1$ are the inverse image by $f$ of $\{k_0\}$ and $\{k_1\}$ respectively, and one can check the hypotheses of Theorem \ref{th:pdr} by letting $h = \nabla f$.

\subsection{Preliminaries and operator model}
\label{sec:prel}

In this subsection, we introduce additional definitions and notation that are needed in our analysis of system \cref{eq:pde-bc}.

The boundary $\Gamma$ of the domain $\Omega$ is a compact and smooth  embedded submanifold of the ambient Euclidian space $\R^d$.
Recalling \cite[Chapitre 1, Section 7.3]{lions-problemes}, the Sobolev spaces $H^s(\Gamma)$ are modeled after  $H^s(\R^{d-1})$ by the mean of partitions of unity subordinated to the covering of $\Gamma$ by charts. 

For each $x$ in $\Gamma$, we denote by $T_x(\Gamma)$ the tangent space at $x$, which we see as a $(d-1)$-dimensional subspace of $\R^d$. Given a smooth function $\varphi : \Gamma \to \R$
 , the total derivative of $\varphi$ at $x \in \Gamma$, which is a linear form on $T_x(\Gamma)$, is denoted  by $\d \varphi(x)$ -- see for instance \cite[Chapter 1]{guillemin-topology}. As a submanifold, $\Gamma$ can be equipped with the canonical Riemannian metric $g$ inherited from $\R^d$:
$
g_x(\gamma_1, \gamma_2) = \gamma_1 \cdot \gamma_2
$
for all $\gamma_1, \gamma_2 \in T_x(\Gamma)$ and $x  \in \Gamma$, where $\cdot$ denotes the usual Euclidian inner product. The Riemannian measure associated with $g$ coincides with the induced hypersurface measure $\d \sigma$.  The Riemannian gradient $\nabla_\Gamma \varphi$ of a smooth real-valued  function $\varphi$ is defined as follows: $\nabla_\Gamma \varphi(x)$ is the unique element in $T_x(\Gamma)$ such that $\d \varphi(x) \gamma = (\nabla_{\Gamma}\varphi \cdot \gamma)_\Gamma$ for all $\gamma$ in $T_x(\Gamma)$. Then, $\nabla_\Gamma \varphi$ is a smooth vector field on $\Gamma$. This definition extends to complex-valued $\varphi$ by linearity. Following \cite[Chapter 2]{taylor-pde}, the Laplace-Beltrami operator $\Delta_\Gamma$ is defined to be the second-order differential operator on $\Gamma$ satisfying
$
\label{eq:def-laplace}
-\int_{\Gamma} \Delta_\Gamma \varphi_1 \varphi_2 \, \d \sigma = \int_{\Gamma} \nabla_\Gamma \varphi_1 \cdot \nabla_\Gamma \varphi_2 \, \d \sigma
$
for all smooth and compactly supported $\varphi_1$ and $\varphi_2$.
One can then define $\nabla_\Gamma \theta$ and $\Delta_\Gamma \theta$ in the sense of distributions for any $\theta$ in (say) $L^2(\Gamma)$. Then, $H^1(\Gamma)$ is the set of all $\theta$ in $L^2(\Gamma)$ such that $\nabla_\Gamma \theta$ belongs to $L^2(\Gamma)^{d}$.
 (recall that here each $T_x(\Gamma)$ is a subspace of $\R^d$). 
 Using the notation $\|x\|^2 \triangleq x\cdot \overline{x}$ for $x$ in $\mathbb{C}^d$, the norm  given by
$
\|\theta\|_{H^1(\Gamma)}^2 = \int_{\Gamma} |\theta|^2 + \| \nabla_\Gamma \theta \|^2 \, \d \sigma
$
is equivalent to those built upon local charts. Likewise, $H^2(\Gamma)$ is the space of all $\theta$ in $L^2(\Gamma)$ such that $ - \Delta_\Gamma \theta$ belongs to $L^2(\Gamma)$.
 For more details, the reader is referred to \cite[Chapters 4 and 5]{taylor-pde}.

From now on, we focus on the submanifold $\Gamma_0$. It follows from the assumption $\overline{\Gamma_0} \cap \overline{\Gamma_1} = \emptyset$ that $\Gamma_0$ is connected and has no boundary. Thus, the spaces $H^1_0(\Gamma_0)$ and $ H^1(\Gamma_0)$ coincide; and for any real $s$, $-\Delta_\Gamma$ extends as a bounded linear operator from $H^s(\Gamma_0)$ to $H^{s - 2}(\Gamma_0)$.
Furthermore, we have the following Green formula on $\Gamma_0$:
\begin{equation}
\label{eq:div-manifold}
\int_{\Gamma_0} \nabla_\Gamma \theta_1 \cdot \nabla_\Gamma \theta_2\, \d \sigma = - \int_{\Gamma_0} \Delta_\Gamma \theta_1 \theta_2 \, \d \sigma,
\end{equation}
for any $\theta_1$ in $H^2(\Gamma_0)$ and $\theta_2$ in $H^1(\Gamma_0)$. Finally, we recall that for sufficiently smooth $u$, say, $u \in H^2(\Omega)$,
the vector field given by the \emph{tangential} derivatives of $u$ on $\Gamma_0$
coincides with the Riemannian gradient $\nabla_\Gamma u$  of the trace $u_{|\Gamma_0}$.
This allows us to write
\begin{equation}
\label{eq:tan-riem}
\|\nabla u \|^2 = |\partial_\nu u|^2 + \|\nabla_\Gamma u \|^2 \quad \mbox{a.e. on}~ \Gamma_0.
\end{equation}

Let us return to the spaces $H$ and $V$. 
One can prove that $V$ is closed
 in $H^1(\Omega) \times H^1(\Gamma_0)$, which makes it a Hilbert space if equipped with the inherited scalar product. In the sequel, we will rather use the following one:
\begin{equation}
\label{eq:scalar-product}
(u_1, u_2)_V \triangleq \int_\Omega \nabla u_1 \cdot \nabla \overline{u_2} \, \d x + \int_{\Gamma_0} \nabla_\Gamma u_1 \cdot \nabla_\Gamma \overline{u_2} \, \d \sigma   + \int_{\Gamma_1} u_1 \overline{u_2} \, \d \sigma.
\end{equation}
Using a standard indirect compactness argument, we see that the norm associated with \cref{eq:scalar-product} is equivalent to that of $H^1(\Omega) \times H^1(\Gamma_0)$. Note that we will frequently identify $V$
as a subspace of $H^1(\Omega)$ and drop the tuple notation. 
We can finally define the operator $\A$: let $W \triangleq [H^2(\Omega) \times H^2(\Gamma_0)] \cap V$, then
\begin{subequations}
\label{eq:generator}
\begin{align}
&\D(\A)  \triangleq \{ [u, v] \in W \times V: \partial_\nu u + u = - \alpha v ~\mbox{on}~ \Gamma_0 \}, \\
&\A[u, v] \triangleq [- v, ( - \Delta u, - \Delta_\Gamma u + \partial_\nu u)].
\end{align}
\end{subequations}

\section{Well-posedness and semi-uniform stability}
\label{sec:wp}

To prove Theorem \ref{th:wp}, we first investigate properties of $\A$.
\begin{proposition}
\label{prop:max-mon}
The unbounded operator $\A$ is maximal monotone. Furthermore, for any $\lambda > 0$, the resolvent  $(\A + \lambda \Id)^{-1}$ is a compact operator on $\H$.
\end{proposition}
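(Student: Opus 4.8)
The plan is to establish the two defining properties of maximal monotonicity separately, and then to deduce compactness of the resolvent from a compact Sobolev embedding.

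First I would check monotonicity. For $U = [u, v] \in \D(\A)$, writing the inner product of $\H = V \times H$ gives $(\A U, U)_\H = (-v, u)_V + ((-\Delta u, -\Delta_\Gamma u + \partial_\nu u), v)_H$, and I would reorganize the second term with the two Green formulas at hand: integration by parts on $\Omega$ and the Laplace--Beltrami identity \cref{eq:div-manifold} on $\Gamma_0$. The interior formula produces a boundary term $-\int_\Gamma \partial_\nu u\,\bar v\,\d\sigma$; over $\Gamma_0$ the contribution $-\int_{\Gamma_0}\partial_\nu u\,\bar v$ cancels the $+\int_{\Gamma_0}\partial_\nu u\,\bar v$ coming from the $\Delta_\Gamma$ term, while over $\Gamma_1$ one substitutes the Robin relation $\partial_\nu u = -u - \alpha v$. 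The surviving terms reconstitute exactly the $V$-inner product, yielding $((-\Delta u, -\Delta_\Gamma u + \partial_\nu u), v)_H = (u, v)_V + \alpha\int_{\Gamma_1}|v|^2\,\d\sigma$, hence $(\A U, U)_\H = (u,v)_V - \overline{(u,v)_V} + \alpha\int_{\Gamma_1}|v|^2\,\d\sigma$. The first two terms are purely imaginary, so $\re(\A U, U)_\H = \alpha\int_{\Gamma_1}|v|^2\,\d\sigma \geq 0$, which is precisely the dissipation carried by the feedback on $\Gamma_1$.

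For maximality I would show $\A + \Id$ is onto $\H$, i.e. solve $(\A + \Id)[u,v] = [f, g]$. Setting $v = u - f$ reduces this to an elliptic problem for $u$: $u - \Delta u = F_\Omega$ in $\Omega$, the Wentzell-type relation $u - \Delta_\Gamma u + \partial_\nu u = F_{\Gamma_0}$ on $\Gamma_0$, and $\partial_\nu u + (1+\alpha)u = \alpha f$ on $\Gamma_1$, where $F_\Omega, F_{\Gamma_0} \in L^2$ are built from $f, g$. Testing against $\phi \in V$ and integrating by parts produces the sesquilinear form $a(u,\phi) = \int_\Omega (u\bar\phi + \nabla u \cdot \nabla\bar\phi)\,\d x + \int_{\Gamma_0}(u\bar\phi + \nabla_\Gamma u \cdot \nabla_\Gamma\bar\phi)\,\d\sigma + (1+\alpha)\int_{\Gamma_1}u\bar\phi\,\d\sigma$, which is bounded on $V$ and satisfies $\re a(u,u) \geq \|u\|_V^2$ since the extra zeroth-order terms are nonnegative; the right-hand side is a bounded antilinear functional on $V$ by the trace theorem. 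Lax--Milgram then yields a unique weak solution $u \in V$.

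The main obstacle is the regularity step: upgrading this weak solution to $u \in W = [H^2(\Omega)\times H^2(\Gamma_0)]\cap V$ and recovering the boundary conditions strongly. Testing first with $\phi \in \mathcal{C}_c^\infty(\Omega)$ gives $\Delta u = u - F_\Omega \in L^2(\Omega)$, so the generalized normal trace $\partial_\nu u \in H^{-1/2}(\Gamma)$ is defined; reinserting the generalized Green formula into $a(u,\phi)=\ell(\phi)$ and localizing via $\overline{\Gamma_0}\cap\overline{\Gamma_1}=\emptyset$ recovers the Robin condition on $\Gamma_1$ (with $\partial_\nu u_{|\Gamma_1}\in H^{1/2}$ since $u_{|\Gamma_1}, f_{|\Gamma_1} \in H^{1/2}$) and the identity $-\Delta_\Gamma\theta + \theta + \partial_\nu u = F_{\Gamma_0}$ on $\Gamma_0$, with $\theta = u_{|\Gamma_0} \in H^1(\Gamma_0)$. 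The difficulty is the coupling of the order-two operator $-\Delta_\Gamma$ with the normal derivative. I would resolve it through the Dirichlet-to-Neumann map $\Lambda$ of the interior problem, a first-order nonnegative self-adjoint pseudodifferential operator on $\Gamma_0$: the boundary identity becomes $(-\Delta_\Gamma + 1 + \Lambda)\theta = G$ with $G \in L^2(\Gamma_0)$, the inhomogeneous normal-derivative contribution lying in $H^{1/2}(\Gamma_0)$ by $H^2$-regularity of the auxiliary homogeneous-Dirichlet/Robin problem. Since $-\Delta_\Gamma + 1 + \Lambda$ is elliptic of order two and positive, it gains two derivatives, so $\theta \in H^2(\Gamma_0)$. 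Feeding $\theta \in H^{3/2}(\Gamma_0)$ back as Dirichlet data into the mixed Dirichlet--Robin problem (whose two conditions do not interact, again by disjoint closures) and invoking standard elliptic regularity gives $u \in H^2(\Omega)$. Hence $u \in W$, $[u, u-f] \in \D(\A)$, and $\A + \Id$ is onto; with monotonicity this makes $\A$ maximal monotone. The identical argument with $\lambda$ in place of $1$ (the form stays coercive) covers every $\lambda > 0$.

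Finally, compactness. For $\lambda > 0$ the operator $(\A + \lambda\Id)^{-1}$ maps $\H$ boundedly into $\D(\A)$ with its graph norm, and $\D(\A) \subset W \times V$. The embedding $\D(\A) \hookrightarrow \H = V \times H$ is compact: on the first factor $W \hookrightarrow V$ reduces to the Rellich embeddings $H^2(\Omega)\hookrightarrow H^1(\Omega)$ and $H^2(\Gamma_0)\hookrightarrow H^1(\Gamma_0)$ (the latter on the compact manifold $\Gamma_0$), and on the second factor $V \hookrightarrow H$ is the compact embedding $H^1(\Omega)\times H^1(\Gamma_0)\hookrightarrow L^2(\Omega)\times L^2(\Gamma_0)$. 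Composing the bounded resolvent with this compact inclusion shows that $(\A + \lambda\Id)^{-1}$ is compact on $\H$.
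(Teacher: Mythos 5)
Your proof is correct, and it matches the paper on two of the three ingredients: the monotonicity computation is the same (the paper derives exactly your identity in the form $(\A X, X)_\H = \alpha \int_{\Gamma_1} |v|^2 \, \d\sigma + 2\i \im (u,v)_V$, its \cref{eq:ax-x}), and the Lax--Milgram reduction after eliminating $v$ coincides with the paper's variational problem \cref{eq:var-u}. Where you genuinely diverge is the regularity step. The paper never introduces the Dirichlet-to-Neumann map: it reads off from the boundary variational identity that $-\Delta_\Gamma u$, a priori in $H^{-1}(\Gamma_0)$, lies in $H^{-1/2}(\Gamma_0)$ (since $\partial_\nu u \in H^{-1/2}(\Gamma)$), applies elliptic regularity for $\Delta_\Gamma$ to get $u_{|\Gamma_0} \in H^{3/2}(\Gamma_0)$ only, then splits $u = u^0 + u^1$ with a cutoff adapted to $\overline{\Gamma_0} \cap \overline{\Gamma_1} = \emptyset$ and solves two decoupled standard elliptic problems (Dirichlet data in $H^{3/2}$ near $\Gamma_0$, Robin data in $H^{1/2}$ near $\Gamma_1$) to conclude $u \in H^2(\Omega)$; only afterwards does it return to $\Gamma_0$, where the improved normal trace upgrades $\Delta_\Gamma u$ to $L^2(\Gamma_0)$ and hence $u_{|\Gamma_0}$ to $H^2(\Gamma_0)$. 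Your one-shot alternative --- absorbing $\partial_\nu u$ into the operator $-\Delta_\Gamma + 1 + \Lambda$, elliptic of order two on the closed manifold $\Gamma_0$, with the inhomogeneous part of the normal trace handled by an auxiliary $H^2$ problem --- is valid and conceptually cleaner, but it leans on pseudodifferential facts you should at least cite: that the DtN map of this mixed problem is, microlocally near $\Gamma_0$, a classical first-order operator (the Robin part on the far component only contributes smoothing), or at minimum that $\Lambda$ maps $H^1(\Gamma_0)$ into $L^2(\Gamma_0)$ so that $-\Delta_\Gamma \theta \in L^2(\Gamma_0)$ and ordinary Laplace--Beltrami regularity applies. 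The paper's two-pass bootstrap buys elementarity (only textbook elliptic theory) and, importantly, explicit constants: it records the quantitative estimate \cref{eq:stronger-estimate} via \cref{eq:3/2-gamma0}, \cref{eq:elliptic-u0}, \cref{eq:elliptic-u1}. That last point touches the one soft spot in your compactness paragraph: knowing $(\A + \lambda\Id)^{-1}$ is bounded into $\D(\A)$ with the \emph{graph} norm and that $\D(\A) \subset W \times V$ as sets does not by itself yield compactness --- you need graph-norm-bounded sets to be bounded in the $W \times V$ norm, which is precisely what the paper's estimate \cref{eq:stronger-estimate} supplies. Your argument is easily completed either by tracking the constants in your own regularity chain or by a closed-graph argument (the inclusion $(\D(\A), \text{graph norm}) \to H^2(\Omega) \times H^2(\Gamma_0) \times V$ has closed graph because both norms dominate that of $\H$), so this is a gloss rather than a gap, but it should be said.
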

\begin{proof} The proof is split into several steps. 


\textbf{Step 1: Monotonicity.}
 Let $X = [u, v] \in \D(A)$. By performing a few integration by parts, we obtain the following formula:
\begin{equation}
\label{eq:ax-x}
(\A X, X)_\H =  \alpha \int_{\Gamma_1} |v|^2 \, \d \sigma + 2\i \im (u, v)_V.
\end{equation}
Taking the real part of \cref{eq:ax-x} yields $\re (\A X, X)_\H \geq 0$.

\textbf{Step 2: Variational equations.} Let $\lambda > 0$. Our goal is to prove that $\A + \lambda \Id$ is surjective. We will simultaneously prove that $(\A + \lambda \Id)^{-1}$ is well-defined and compact. Let $[f, g] \in \H$ with $g = (g_1, g_2) \in H$. We need to find $X = [u, v] \in \D(\A)$ such that $\A X + X = [f, g]$, i.e., $- v + \lambda u = f$ and
\begin{subequations}
\label{eq:lambda-problem}
\begin{align}
\label{eq:u-v-g1}
&- \Delta u + \lambda v = g_1 &&\mbox{in}~ \Omega, \\
\label{eq:u-v-g2}
&- \Delta_\Gamma u + \partial_\nu u + \lambda v = g_2 &&\mbox{on}~\Gamma_0.
\end{align}
\end{subequations}
We infer from \cref{eq:u-v-g1}-\cref{eq:u-v-g2} that any solution $[u, v]$ must satisfy the following variational problem:
\begin{multline}
\label{eq:var-u}
\int_\Omega  \nabla u \cdot \nabla \overline{w}  \, \d x + \int_{\Gamma_0} \nabla_\Gamma u  \cdot \nabla_\Gamma \overline{w} \, \d \sigma + \int_{\Gamma_1} [u + \alpha v] \overline{w} \, \d x \\ + \lambda (v, w)_H = (g, w)_H \quad \mbox{for all}~ w \in V.
\end{multline}
As usual for that kind of problem (see for instance \cite[Proof of Proposition 2.1]{liu_frequency_2007}), the existence of $[u, v] \in \H$ satisfying both  $-v + \lambda u = f$ and \cref{eq:var-u} is proved by obtaining a variational equation in the $v$-variable only, and then using Lax-Milgram theorem to find an appropriate $v \in V$, which in turn uniquely determines $u$. It remains to prove that $[u, v]$ belongs to $\D(\A)$ and that \cref{eq:u-v-g1}-\cref{eq:u-v-g2} are  satisfied in a $L^2$-sense. By evaluating \cref{eq:var-u} for test functions $w$ in $\mathcal{C}_c^\infty(\Omega)$, we obtain that the distribution $\Delta u$ is in fact a function in $L^2(\Omega)$, with
 \cref{eq:u-v-g1} satisfied a.e. in $\Omega$. Recall that $\partial_\nu u$ is then uniquely defined in $H^{-1/2}(\Gamma)$ by the formula
\begin{equation}
\label{eq:distrib-normal}
\langle \partial_\nu u , \theta \rangle_{H^{1/2}(\Gamma)} = \int_{\Omega} \nabla u \cdot \nabla w \, \d x - \int_{\Omega} \Delta u w \, \d x
\end{equation}
for all $\theta \in H^{1/2}(\Gamma)$, where $w$ is any element in $H^1(\Omega)$ such that $w_{|\Gamma} = \theta$. Furthermore,
\begin{equation}
\|\partial_\nu u\|_{H^{-1/2}(\Gamma)} \leq K \{ \|\Delta u\|_{L^2(\Omega)} + \|u\|_{H^1(\Omega)}\}.
\end{equation}
Plugging \cref{eq:u-v-g1} and \cref{eq:distrib-normal} into \cref{eq:var-u} leads to another variational equation, from which we shall recover the boundary conditions satisfied by $u$:
\begin{multline}
\label{eq:var-gamma}
 \langle -\Delta_{\Gamma}u, \overline{w}_{|\Gamma_0} \rangle_{H^1(\Gamma_0)} + \langle  \partial_\nu u, \overline{w}_{|\Gamma}\rangle_{H^{1/2}(\Gamma)}  \\  -  \int_{\Gamma_1} [u + \alpha v] \overline{w} \, \d \sigma = \int_{\Gamma_0} g_2 \overline{w} \, \d \sigma \quad \mbox{for all}~ w \in V.
\end{multline}

\textbf{Step 3: ``Decoupling" the boundary conditions.}
Since $\overline{\Gamma_0} \cap \overline{\Gamma_1} = \emptyset$, the indicator functions $\mathds{1}_{\Gamma_0}$ and $\mathds{1}_{\Gamma_1}$ are smooth. As a notable consequence, for any real $s \geq 0$,  the extension map  $\theta \to \mathds{1}_{\Gamma_i} \theta$ belongs to $\L(H^s(\Gamma_i), H^s(\Gamma))$, $i \in \{0, 1\}$. In particular, given an arbitrary $\theta \in H^1(\Gamma_0)$, $\mathds{1}_{\Gamma_0} \theta$ is in 
$H^{1/2}(\Gamma)$,
so that taking any continuous right-inverse of the trace provides an element $w \in V$ satisfying $w_{|\Gamma_0} = \theta$ and $w_{|\Gamma_1} = 0$. Evaluating \cref{eq:var-gamma} for such $w$ yields
\begin{equation}
\label{eq:var-gamma0}
 \langle -\Delta_{\Gamma}u, \theta \rangle_{H^1(\Gamma_0)} + \langle \partial_\nu u, \mathds{1}_{\Gamma_0} \theta \rangle_{H^{1/2}(\Gamma)}  
= \int_{\Gamma_0} [g_2 - \lambda v] \theta \,  \d \sigma
\end{equation}
holding for arbitrary $\theta \in H^1(\Gamma_0)$.  Again,  the map $\theta \mapsto \mathds{1}_{\Gamma_0} \theta$ is in  $\L(H^{1/2}(\Gamma_0), H^{1/2}(\Gamma))$; hence, it follows from \cref{eq:var-gamma0} that $- \Delta_{\Gamma}u$, which is \emph{a priori} defined in $H^{-1}(\Gamma_0)$, belongs in fact to $H^{-1/2}(\Gamma_0)$. 
Then, elliptic regularity for the Laplace-Beltrami operator -- see, e.g., \cite{taylor-pde} -- yields $u_{|\Gamma_0} \in H^{3/2}(\Gamma_0)$ and
\begin{multline}
\label{eq:3/2-gamma0}
\|u_{|\Gamma_0}\|_{H^{3/2}(\Gamma_0)} \leq K \{ \|\partial_\nu u\|_{H^{-1/2}(\Gamma)} +  \|g_2- \lambda v\|_{L^2(\Gamma_0)}\} \\
 \leq K' \{ \|\Delta u \|_{L^2(\Omega)} + \|u\|_{H^1(\Omega)} + \|g_2 - \lambda v\|_{L^2(\Gamma_0)}\} \\ \leq K' \{  \|g_1 - \lambda v\|_{L^2(\Omega)} + \|u\|_{H^1(\Omega)} + \|g_2- \lambda v\|_{L^2(\Gamma_0)}\}.
\end{multline}
To prove that $u \in H^2(\Omega)$, we start by picking a function $\rho \in \mathcal{C}^2(\overline{\Omega})$ such that $\rho = 1$ (resp. $\rho = 0$) in some open  neighborhood $\Gamma_0^\eps$ of $\Gamma_0$ (resp. $\Gamma_1^\eps$ of $\Gamma_1$). We let $u^0 \triangleq \rho u$ and $u^{1} \triangleq (1- \rho) u$, so that $u = u^0 + u^1$. Then, $u^0$ belongs to $H^1(\Omega)$ and $\Delta u^0 = \rho \Delta u + u \Delta \rho + 2 \nabla u \cdot \nabla \rho \in L^2(\Omega)$. First, we have $u^1_{|\Gamma} = \mathds{1}_{\Gamma_0} u_{|\Gamma} \in H^{3/2}(\Gamma_0)$. Then, applying elliptic theory (\cite{lions-problemes,taylor-pde}), we get that  $u^1 \in H^2(\Omega)$ together with the estimate
\begin{multline}
\label{eq:elliptic-u0}
\|u^0\|_{H^2(\Omega)} \leq K \{ \|\Delta u^0\|_{L^2(\Omega)} + \|u_{|\Gamma_0}\|_{H^{3/2}(\Gamma_0)}\} \\
\leq K' \{  \|g_1 - \lambda v\|_{L^2(\Omega)} + \|u\|_{H^1(\Omega)}+  \|g_2- \lambda v\|_{L^2(\Gamma_0)}\}.
\end{multline}
Next, we look at $u^1$. Again, $\Delta u^1 \in L^2(\Omega)$, and $\partial_\nu u^1$ is well-defined in $H^{-1/2}(\Gamma)$ as well. 
We claim that the (distributional) normal derivative $\partial_\nu u^1$
satisfies, for any $\theta \in H^{1/2}(\Gamma)$, 
$
\langle \partial_\nu u^1, \theta \rangle_{H^{1/2}(\Gamma)} = \langle \partial_\nu u, \mathds{1}_{\Gamma_1} \theta \rangle_{H^{1/2}(\Gamma)}.
$
This can be deduced from \cref{eq:distrib-normal} by constructing, given $\theta \in H^{1/2}(\Gamma)$, a function $w \in H^1(\Omega)$ satisfying $w_{|\Gamma} = \mathds{1}_{\Gamma_1} \theta$ and whose support is contained in $\Gamma_1^{\eps}$ ( where $u$ and $u^1$ coincide).
On the other hand, using the same argument as for $\Gamma_0$, we can particularize \cref{eq:var-gamma} to elements $w \in V$ satisfying $w_{|\Gamma} = \mathds{1}_{\Gamma_1}\theta $ for any given arbitrary $\theta \in H^{1/2}(\Gamma)$.
 This leads to
\begin{equation}
\langle \partial_\nu u, \mathds{1}_{\Gamma_1} \theta \rangle_{H^{1/2}(\Gamma)} + \int_{\Gamma_1} [u + \alpha v] \theta \, \d \sigma = 0,
\end{equation}
which means that $\partial_\nu u^1$ is  in $H^{1/2}(\Gamma)$, with $\partial_\nu u^1 = \mathds{1}_{\Gamma_1}[- u - \alpha v]$. Combined with $\Delta u^1 \in L^2(\Omega)$, elliptic theory yields that $u^1 \in H^2(\Omega)$, with
\begin{equation}
\label{eq:elliptic-u1}
\|u^1\|_{H^2(\Omega)} \\ \leq K \{ \|g_1 - \lambda v\|_{L^2(\Omega)} +  \|u\|_{H^1(\Omega)} + \|v\|_{H^1(\Omega)}\}.
\end{equation}
Then, $u = u^0 + u^1 \in H^2(\Omega)$, $\partial_\nu + u = - \alpha v$ on $\Gamma_1$. Going back to \cref{eq:var-gamma0}, we see that $\Delta_\Gamma u \in L^2(\Gamma_0)$ and thus $u_{|\Gamma_0} \in H^2(\Gamma_0)$. It is now proved that $[u, v] \in \D(\A)$. 
%

\textbf{Step 4: Compactness.} The following argument is standard: by substituting the identity $-v + \lambda u = g_1$ in order to
rewrite the variational problem \cref{eq:var-u} in terms of $u$ only and letting $w = u$ in the resulting equation, one can obtain an estimate of the form $\|[u, v]\|_\H \leq K \|[f, g]\|_H$. From there, we combine \cref{eq:3/2-gamma0}, \cref{eq:elliptic-u0}, and \cref{eq:elliptic-u1} to obtain
\begin{equation}
\label{eq:stronger-estimate}
\|u\|_{H^2(\Omega) \times H^{3/2}(\Gamma_0)} + \|v\|_V \leq K \|[f, g]\|_\H.
\end{equation}
Since $H^2(\Omega) \times H^{3/2}(\Gamma_0)$ is compactly embedded into $H^1(\Omega) \times H^1(\Gamma_0)$, and $V$ is compactly embedded into $H$, \cref{eq:stronger-estimate} proves that $(\A + \lambda \Id)^{-1}$ is a compact operator.
\end{proof}

The next proposition is motivated by the spectral criterion for semi-uniform stability.
\begin{proposition}
\label{prop:spectrum}
 We have $ \spe(\A) \cap \i \R = \emptyset$.
\end{proposition}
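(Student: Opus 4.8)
The plan is to show that the skew-adjoint-like operator $\A$ has no purely imaginary eigenvalues, and since we already know from Proposition~\ref{prop:max-mon} that $(\A+\lambda\Id)^{-1}$ is compact, the spectrum of $\A$ is discrete and consists only of eigenvalues; thus it suffices to rule out eigenvalues on $\i\R$. Concretely, I would suppose $\i\beta$ is an eigenvalue for some real $\beta$ with eigenvector $X=[u,v]\in\D(\A)$, $X\neq 0$, so that $\A X = \i\beta X$. The case $\beta=0$ should be handled first and separately: if $\A X = 0$ then $v=0$ and $u$ solves a static problem, and pairing with $X$ via \cref{eq:ax-x} forces $v=0$ on $\Gamma_1$ trivially, after which the variational identity with $w=u$ collapses the $V$-norm of $u$ and gives $X=0$.

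For $\beta\neq 0$, the key dissipation identity is the starting point. Taking the real part of $(\A X, X)_\H = \i\beta\|X\|_\H^2$ and using \cref{eq:ax-x}, I get
\begin{equation}
\alpha \int_{\Gamma_1} |v|^2 \, \d\sigma = \re(\A X, X)_\H = \re\bigl(\i\beta\|X\|_\H^2\bigr) = 0,
\end{equation}
so that $v=0$ on $\Gamma_1$. Since $\A X = \i\beta X$ reads $-v = \i\beta u$ in $V$ and $(-\Delta u, -\Delta_\Gamma u + \partial_\nu u) = \i\beta v$ in $H$, we deduce $v = -\i\beta u$, hence $u=0$ on $\Gamma_1$ as well; combined with the boundary condition $\partial_\nu u + u = -\alpha v$ on $\Gamma_1$ defining $\D(\A)$, this yields $\partial_\nu u = 0$ on $\Gamma_1$. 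Substituting $v=-\i\beta u$ into the interior and $\Gamma_0$ equations gives the overdetermined elliptic eigenvalue system $-\Delta u = \beta^2 u$ in $\Omega$, together with $-\Delta_\Gamma u + \partial_\nu u = \beta^2 u$ on $\Gamma_0$, and the Cauchy data $u = \partial_\nu u = 0$ on $\Gamma_1$.

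The crux is then to conclude that $u\equiv 0$ from the fact that $u$ satisfies $-\Delta u = \beta^2 u$ in $\Omega$ and has vanishing Cauchy data on the relatively open piece $\Gamma_1$. This is exactly a unique continuation statement for the Helmholtz-type equation: since $u$ and its normal derivative vanish on the open subset $\Gamma_1$ of the boundary and $u$ solves a second-order elliptic equation with smooth (here constant) coefficients, Holmgren's uniqueness theorem—or the stronger Aronszajn/unique continuation principle—forces $u\equiv 0$ in a neighborhood of $\Gamma_1$ and hence, by connectedness of $\Omega$ and propagation of the zero set, throughout $\Omega$. From $u\equiv 0$ we get $v = -\i\beta u = 0$, so $X=0$, contradicting that $\i\beta$ is an eigenvalue.

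I expect the main obstacle to be the rigorous invocation of unique continuation with the correct regularity bookkeeping: one must ensure $u$ is regular enough near $\Gamma_1$ (which follows from the $H^2(\Omega)$ membership built into $\D(\A)$ and local elliptic regularity, the coefficients being smooth and the right-hand side $\beta^2 u$ being as regular as $u$) so that the Cauchy data $u=\partial_\nu u=0$ on $\Gamma_1$ genuinely make sense as traces and Holmgren's theorem applies. A secondary subtlety is that $\Gamma_1$ is only a proper open subset of $\Gamma$, so the argument must propagate the vanishing from a neighborhood of $\Gamma_1$ into the full connected interior $\Omega$ rather than relying on vanishing of the global boundary trace; this is standard but should be stated carefully. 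Everything else—the dissipation identity, the reduction $v=-\i\beta u$, and the extraction of the Cauchy data—is routine given \cref{eq:ax-x} and the definition \cref{eq:generator} of $\A$.
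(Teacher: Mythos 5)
Your proposal is correct and takes essentially the same route as the paper: compactness of the resolvent reduces the spectrum to eigenvalues, the dissipation identity \cref{eq:ax-x} forces $v=0$ on $\Gamma_1$ (your real-part computation is just an algebraic rearrangement of the paper's step of dividing by $\i\omega$ and taking the imaginary part), and the overdetermined system with Cauchy data $u=\partial_\nu u=0$ on $\Gamma_1$ for $-\Delta u = \beta^2 u$ is dispatched by Holmgren-type unique continuation, exactly as in the paper's appeal to the John--Holmgren theorem. Your closing remarks on regularity near $\Gamma_1$ and on propagating the vanishing through the connected domain are sound elaborations of the same argument rather than a different approach.
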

\begin{proof}
First, 
due to the compactness of $(\A + \lambda \Id)^{-1}$ for $\lambda > 0$,  $\spe(\A)$ consists of only eigenvalues. That being said, we now prove the result by contradiction. Suppose there exists $\lambda = \mathrm{i}\omega \in \mathrm{i}\mathbb{R}$ such that for some non-zero $X = [u, v] \in \D(\A)$, $\A X = \i \omega X$. We start with the case $\omega = 0$. Then, $\A[u, v] = 0$, which means that $v = 0$ and $u$ solves the following boundary-value problem:
\begin{subequations}
\begin{align}
\label{eq:harm}
&- \Delta u = 0 &&\mbox{in}~ \Omega, \\
\label{eq:harm-gamma0}
&- \Delta_\Gamma u = - \partial_\nu u  &&\mbox{on}~ \Gamma_0, \\
\label{eq:harm-gamma1}
& \partial_\nu u + u = 0 & &\mbox{on}~ \Gamma_1.
\end{align}
\end{subequations}
We multiply \cref{eq:harm} by $\overline{u}$, integrate over $\Omega$, and use \cref{eq:harm-gamma0}-\cref{eq:harm-gamma1} along with Green formulas on $\Omega$ and $\Gamma_0$ to obtain
 $\|u\|^2_V = 0$; thus, $X = 0$. Now, in the case where  $\omega$ is non-zero, we can write
\begin{equation} 
\|X\|^2_\H = \frac{1}{\i \omega} (\A X, X)_\H.
\end{equation}
Recalling the identity \cref{eq:ax-x}, we have
\begin{equation}
\label{eq:norm-id}
\|X\|^2_\H = \frac{\alpha}{\i \omega} \int_{\Gamma_1} |v|^2 \, \d \sigma - \frac{2}{\omega} \im (u, v)_V.
\end{equation}
Taking the imaginary part of \cref{eq:norm-id} yields 
$
v = 0
$ a.e. on $\Gamma_1$.
On the other hand, $u = - \i \omega v$ and because $[u, v] \in \D(\A)$,
\begin{subequations}
\begin{align}
&- \Delta u - \omega^2u = 0 &&\mbox{in}~ \Omega, \\
& u = 0 && \mbox{on}~ \Gamma_1, \\
& \partial_\nu u = 0 & &\mbox{on}~ \Gamma_1.
\end{align}
\end{subequations}
Furthermore, the differential operator $- \Delta - \omega^2 \Id$ is elliptic and has real analytic coefficients. Thus, we can apply {John-Holmgrem theorem} on unique continuation across  non-characteristic hypersurfaces
 to obtain that $u = 0$ in $\Omega$, which completes the proof.
\end{proof}

We now conclude the section.

\begin{proof}[Proof of Theorem \ref{th:wp}] As a maximal dissipative operator, $-\A$ generates a strongly continuous semigroup of linear contractions on $\H$ by virtue of Lumer-Phillips theorem. Furthermore, because $\spe(\A) \cap \i \R = \emptyset$, we can apply \cite[Theorem 1]{batty-non-uniform} to obtain the desired semi-uniform stability property \cref{eq:semi-uniform}.
\end{proof}

\section{Resolvent estimate and polynomial decay rate}
\label{sec:pdr}

The main technical contribution of our paper is the following resolvent estimate.
\begin{proposition}
\label{prop:resolvent-estimate}
Under the geometrical conditions of Theorem \ref{th:pdr}, we have
\begin{equation}
\label{eq:resolvent-estimate}
\sup_{\omega \in \mathbb{R},  |\omega| \geq 1} \frac{1}{\omega^2}  \left \|  (\mathcal{A} + \mathrm{i} \omega \mathrm{id} )^{-1} \right \|_{\mathcal{L}(\mathcal{H})} < + \infty.
\end{equation}
\end{proposition}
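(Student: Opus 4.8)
The plan is to argue by contradiction, exploiting the standard reduction of polynomial decay to a resolvent bound (here of order $\omega^2$, which matches the exponent $1/2$ in \eqref{eq:pdr} through the Borichev--Tomilov characterization, to be invoked in the proof of Theorem \ref{th:pdr}). Since $\i\R\cap\spe(\A)=\emptyset$ by Proposition \ref{prop:spectrum}, a failure of \eqref{eq:resolvent-estimate} produces sequences $\on\to+\infty$ (by conjugation symmetry we may take $\on>0$) and $X_n=[u_n,v_n]\in\D(\A)$ with $\|X_n\|_\H=1$ and $\on^2(\A+\i\on\Id)X_n\to0$ in $\H$. Writing $(\A+\i\on\Id)X_n=F_n=[f_n,(g_{1,n},g_{2,n})]$, one has $v_n=\i\on u_n-f_n$ and, after eliminating $v_n$, the reduced system $-\Delta u_n-\on^2 u_n=g_{1,n}+\i\on f_n$ in $\Omega$, $-\Delta_\Gamma u_n+\partial_\nu u_n-\on^2 u_n=g_{2,n}+\i\on f_n$ on $\Gamma_0$, and $\partial_\nu u_n+u_n=-\alpha(\i\on u_n-f_n)$ on $\Gamma_1$. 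Because $\|F_n\|_\H=o(\on^{-2})$, the effective sources are small: $\|g_{1,n}+\i\on f_n\|_{L^2(\Omega)}=o(\on^{-1})$, and likewise on $\Gamma_0$. The goal is to show $\|X_n\|_\H\to0$, a contradiction.

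First I would extract the dissipation. Pairing the resolvent equation with $X_n$ and using \eqref{eq:ax-x}, the real part gives $\alpha\int_{\Gamma_1}|v_n|^2\,\d\sigma=\re(F_n,X_n)_\H=o(\on^{-2})$. Since $v_n=\i\on u_n-f_n$, this forces $\on\|u_n\|_{L^2(\Gamma_1)}\to0$ and, through the Robin relation, $\|\partial_\nu u_n\|_{L^2(\Gamma_1)}\to0$, so every trace on the damped boundary is negligible. In parallel, testing the interior and the $\Gamma_0$ equations against $\overline{u_n}$ and adding them, the normal-derivative contributions on $\Gamma_0$ cancel and the Green formula \eqref{eq:div-manifold} yields the energy balance $\|u_n\|_V^2-\on^2\bigl(\|u_n\|_{L^2(\Omega)}^2+\|u_n\|_{L^2(\Gamma_0)}^2\bigr)\to0$; this ties the potential and kinetic energies together but does not separate them.

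The core is a Rellich--Morawetz multiplier: I would test the interior equation against $2\,h\cdot\nabla\overline{u_n}+(\dive h-\rho)\overline{u_n}$ and take real parts. The second-order part produces $2\re\int_\Omega (J_h\nabla u_n)\cdot\nabla\overline{u_n}$, which by hypothesis \ref{it:jacobian} dominates $2\rho\|\nabla u_n\|_{L^2(\Omega)}^2$; the zeroth-order corrector is tuned so that the full interior contribution is bounded below by $\rho\bigl(\|\nabla u_n\|_{L^2(\Omega)}^2+\on^2\|u_n\|_{L^2(\Omega)}^2\bigr)$ up to a harmless $\|u_n\|_{L^2(\Omega)}^2$. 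On $\Gamma_1$, hypothesis \ref{it:gamma1} makes the tangential term $(h\cdot\nu)|\nabla_\Gamma u_n|^2\ge m|\nabla_\Gamma u_n|^2$ favorable, the cross term is absorbed by Young's inequality, and the remaining boundary terms are controlled by the smallness of $u_n$ and $\partial_\nu u_n$ on $\Gamma_1$. On $\Gamma_0$, hypothesis \ref{it:gamma0} is decisive: normality of $h$ gives $h\cdot\nabla\overline{u_n}=(h\cdot\nu)\partial_\nu\overline{u_n}$, so the dangerous $\partial_\nu u_n\cdot\nabla_\Gamma u_n$ cross term disappears, and, using \eqref{eq:tan-riem} together with a tangential integration by parts against the $\Gamma_0$ equation, the $\Gamma_0$ integrand reduces to the sign-definite $-\int_{\Gamma_0}(h\cdot\nu)|\partial_\nu u_n|^2\ge0$ plus lower-order terms. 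Since the forcing side is $o(1)$, I then obtain both $\|\nabla u_n\|_{L^2(\Omega)}^2+\on^2\|u_n\|_{L^2(\Omega)}^2\to0$ and $\int_{\Gamma_0}|h\cdot\nu|\,|\partial_\nu u_n|^2\to0$; in particular $u_n\to0$ in $H^1(\Omega)$ and the coupling trace $\partial_\nu u_n$ vanishes on $\Gamma_0$.

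The hard part will be the energy retained by the dynamical boundary, namely $\int_{\Gamma_0}|\nabla_\Gamma u_n|^2$ and $\on^2\|u_n\|_{L^2(\Gamma_0)}^2$: the bulk multiplier above does not see them, and the energy balance only shows these two quantities are asymptotically equal, so neither the interior estimate nor \eqref{eq:ax-x} controls them individually. This is precisely the indirect-damping difficulty, the dissipation on $\Gamma_1$ having to propagate across $\Omega$ to reach $\Gamma_0$. Here the plan is to feed the now-small normal trace $\partial_\nu u_n$ as a source into the boundary equation $-\Delta_\Gamma u_n-\on^2 u_n=(g_{2,n}+\i\on f_n)-\partial_\nu u_n$ on the closed manifold $\Gamma_0$, and to combine this with the energy balance and the strong $H^1(\Omega)$-convergence of $u_n$ in order to exclude high-frequency concentration of $u_n|_{\Gamma_0}$ near the spectrum of $-\Delta_\Gamma$; the normal-field hypothesis \ref{it:gamma0} is what keeps the $\Gamma_0$ contributions sign-definite throughout this step. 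Once $\int_{\Gamma_0}|\nabla_\Gamma u_n|^2\to0$ (equivalently $\on^2\|u_n\|_{L^2(\Gamma_0)}^2\to0$) is secured, all contributions to $\|X_n\|_\H^2$ vanish, contradicting $\|X_n\|_\H=1$ and establishing \eqref{eq:resolvent-estimate}. I expect this final boundary step to be the technical heart of the argument.
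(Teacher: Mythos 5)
Your overall skeleton matches the paper's proof (contradiction sequence $X_n$ with $\|F_n\|_\H = o(\on^{-2})$, the dissipation identity \cref{eq:ax-x} yielding smallness of $v_n$, $u_n$, $\partial_\nu u_n$ on $\Gamma_1$, a Rellich--Morawetz multiplier built from $h$ with the $\Gamma_0$ cross term collapsing because $h = (h\cdot\nu)\nu$, and the boundary equation tested against $(h\cdot\nu)\overline{u_n}$), but two essential ingredients are missing, and they are genuine gaps rather than omitted routine detail. First, your multiplier bookkeeping produces \emph{unweighted} cross terms on $\Gamma_0$, notably $\re\int_{\Gamma_0}\partial_\nu u_n\,\overline{u_n}\,\dive h\,\d\sigma$ coming from the zeroth-order corrector; since Item \ref{it:jacobian} forces $\dive h \geq d\rho > 0$ on $\overline{\Omega}$, this term is not ``lower-order'', and it cannot be absorbed into your sign-definite weighted term $-\int_{\Gamma_0}(h\cdot\nu)|\partial_\nu u_n|^2\,\d\sigma$, because $h\cdot\nu$ is only $\leq 0$ and may vanish on parts of $\Gamma_0$. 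One needs an a priori unweighted bound $\int_{\Gamma_0}|\partial_\nu u_n|^2\,\d\sigma = O(1)$ \emph{before} running the $h$-multiplier; the paper obtains it from a separate Rellich trace identity \cref{eq:trace} with an auxiliary field $\tilde h$ satisfying $\tilde h = \nu$ on $\Gamma_0$ and $\tilde h = 0$ on $\Gamma_1$ (Step 3, reused in refined form in Step 7). Nothing in your proposal supplies this, and your later claim that ``the coupling trace $\partial_\nu u_n$ vanishes on $\Gamma_0$'' follows from your estimates only in the $|h\cdot\nu|$-weighted sense, which is not enough downstream.

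Second, the step you yourself flag as the heart --- killing $\on^2\|u_n\|^2_{L^2(\Gamma_0)}$ and $\|\nabla_\Gamma u_n\|^2_{L^2(\Gamma_0)}$ --- is a plan that fails as stated. Viewing \cref{eq:wave-gamma} as $(-\Delta_\Gamma - \on^2)u_n = \text{small}$ on the closed manifold $\Gamma_0$ cannot ``exclude high-frequency concentration'': $\on^2$ may sit at or arbitrarily near eigenvalues of $-\Delta_\Gamma$, so $-\Delta_\Gamma - \on^2\Id$ admits no uniform resolvent bound, and a small right-hand side is compatible with $\|u_n\|_{L^2(\Gamma_0)}$ of order $\on^{-1}$, exactly the borderline size you must exclude. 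The paper's mechanism is different and inherently quantitative: it bootstraps rates --- interior energy $o(\on^{\eta-1})$ in Step 6, then the refined coupling estimate $\int_{\Gamma_0}|\partial_\nu u_n|^2\,\d\sigma = o(\on^{\eta-1})$ via the $\tilde h$ identity in Step 7, then the choice $\eta = 1/3$ to reach interior energy $o(\on^{-1})$ --- and only then converts interior decay into boundary decay through the trace interpolation $\on^2\|u_n\|^2_{L^2(\Gamma_0)} \leq K\on\|u_n\|_{H^{1}(\Omega)}\|\on u_n\|_{L^2(\Omega)}$ of \cref{eq:interp}, after which the $\Gamma_0$ equipartition identity (your ``energy balance'') eliminates $\|\nabla_\Gamma u_n\|^2$. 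Note that this interpolation needs precisely the rate $o(\on^{-1})$: your qualitative conclusion ``interior energy $\to 0$'' loses a factor of $\on$ and yields nothing on the trace. So even granting your multiplier step, the argument as proposed does not close; the missing ideas are the auxiliary normal field $\tilde h$ with its trace identity, and the rate bootstrap combined with $H^{1/2}$-trace interpolation.
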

Assume for a moment that Proposition \ref{prop:resolvent-estimate} is established. 

\begin{proof}[Proof of Theorem \ref{th:pdr}] We recall that $\{\S_t\}$ is a bounded semigroup with generator $-\A$ that satisfies $\spe(\A) \cap \i \R = \emptyset$. Thus, we can
apply \cite[Theorem 2.4]{borichev_polynomial} to deduce from \cref{eq:resolvent-estimate} that for some $C > 0$,
\begin{equation}
\|\S_t (\A + \Id)^{-1} \|_{\L(\H)} \leq C t^{-1/2}.
\end{equation}
The operator $(\A + \Id)^{-1}$ is an isomorphism between $\H$ and $\D(\A)$ endowed with the graph norm, hence \cref{eq:pdr}.
\end{proof}

We now give the proof of the desired resolvent estimate.

\begin{proof}[Proof of Proposition \ref{prop:resolvent-estimate}]
We proceed by contradiction. Assume there exist  sequences of real numbers $\on$ with $|\on| \to + \infty$ and vectors $X_n = [u_n, v_n] \in \D(\A)$ with $\|X_n\|_\H = 1$ such that 
\begin{equation}
\label{eq:cont-Xn}
\omega_n^{2}  \| \mathcal{A}X_n + \mathrm{i} \omega_n X_n \|_\mathcal{H} \to 0,
\end{equation}
By taking a subsequence for which all $\on$ are either positive or all negative and replacing all $X_n$ by $-X_n$ if needed, we may assume that all $\on$ are positive. We shall obtain a contradiction by proving that $\|X_n\|_\H \to 0$ as $n$ goes to $+ \infty$. The proof is split into several steps as it involves some back and forths between estimates on $\Omega$, $\Gamma_0$, and $\Gamma_1$.

\textbf{Step 1: Obtaining Helmhotz-like equations.} We start by detailing \cref{eq:cont-Xn}:
\begin{subequations}
\label{eq:cont-Xn-detailed}
\begin{align}
\label{eq:v-u}
&\on^{2} (-v_n +\i \on u_n) \to 0 &&\mbox{in}~H^1(\Omega), \\
\label{eq:v-u-gamma}
&\on^{2} (-v_n +\i \on u_n) \to 0 &&\mbox{in}~H^1(\Gamma_0), \\
\label{eq:Dv-u}
&\on^{2} ( -\Delta u_n + \i \on v_n) \to 0 & &\mbox{in}~ L^2(\Omega), \\
\label{eq:Dv-u-gamma}
&\on^{2}( - \Delta_\Gamma u_n + \partial_\nu u_n + \i \on v_{n} ) \to 0 & & \mbox{in}~ L^2(\Gamma_0).
\end{align}
\end{subequations}
Plugging \cref{eq:v-u} into \cref{eq:Dv-u} and  \cref{eq:v-u-gamma} into \cref{eq:Dv-u-gamma} yields
\begin{subequations}
\label{eq:comp-vn}
\begin{align}
&\on(-\Delta u_n - \on^2 u_n) \to 0  &&\mbox{in}~L^2(\Omega),\\
&\on(-\Delta_\Gamma u_n -  \on^2 u_n + \partial_\nu u_n) \to 0 &&\mbox{in}~ L^2(\Gamma_0).
\end{align}
\end{subequations}
Let us reformulate \cref{eq:comp-vn} as follows: there exist sequences $\{f_n\} \subset L^2(\Omega)$ and $\{g_n\} \subset L^2(\Gamma_0)$ such that
\begin{subequations}
\label{eq:helmhotz}
\begin{align}
\label{eq:wave}
&- \Delta u_n - \on^2 u_n = f_n &&\mbox{in}~ \Omega,\\
\label{eq:wave-gamma}
&- \Delta_\Gamma u_n - \on^2 u_n = - \partial_\nu u_n + g_n &&\mbox{on}~ \Gamma_0,
\end{align}
\end{subequations}
with, using Landau notation,
\begin{equation}
\label{eq:est-rhs}
\|f_n\|_{L^2(\Omega)} = o(\on^{-1}) \quad \mbox{and}~ \|g_n\|_{L^2(\Gamma_0)} = o(\on^{-1}).
\end{equation}

\textbf{Step 2: Estimate of the feedback term.} Coming back to \cref{eq:cont-Xn}, we have
\begin{equation}
\label{eq:X-F}
\A X_n + \i \on X_n = F_n,
\end{equation}
where  $\{F_n \} \subset  \H$ is such that $\|F_n\|_\H = o(\on^{-2})$. Take the real part of the scalar product of \cref{eq:X-F} with $X_n$ to obtain
\begin{equation}
\label{eq:scalar}
\re (\A X_n, X_n)_\H = \re (F_n, X_n)_\H = o(\on^{-2}).
\end{equation}
Recalling the identity \cref{eq:ax-x}, it follows from \cref{eq:scalar} that
\begin{equation}
\label{eq:trace-vn}
\int_{\Gamma_1} |v_n|^2 \, \mathrm{d} \sigma = o(\omega_n^{-2}).
\end{equation}
Equation \cref{eq:trace-vn} together with \cref{eq:v-u} and continuity of the trace operator from $H^1(\Omega)$ to $L^2(\Gamma_1)$ yields $
\int_{\Gamma_1} |u_n|^2 \, \d \sigma = o(\on^{-4})$.
 Furthermore, since each $X_n$ is in $\D(\A)$, $\partial_\nu u_n + u_n = -\alpha v_n$ on $\Gamma_1$, and thus
\begin{equation}
\label{eq:est-feedback}
\int_{\Gamma_1} |\partial_\nu u_n|^2 \, \d \sigma = o(\on^{-2}).
\end{equation}

\textbf{Step 3: Estimate of the coupling term.}  It is assumed that $\overline{\Gamma_0} \cap \overline{\Gamma_1} = \emptyset$. As a consequence, there exists a vector field $\tilde{h} \in \mathcal{C}^2(\overline{\Omega})^d$ such that $\tilde{h} = \nu$ on $\Gamma_0$ and $\tilde{h} = 0$ on $\Gamma_1$. Multiplying \cref{eq:wave} by $2\tilde{h}\cdot \nabla \overline{u_n}$ and integrating over $\Omega$ leads to the following classical trace identity:
\begin{multline}
\label{eq:trace}
\int_{\Gamma_0} \omega_n^2 |u_n|^2 - \|\nabla u_n\|^2 \, \mathrm{d} \sigma   =  2 \re \int_\Omega [J_{\tilde{h}}\nabla u_n] \cdot  \nabla  \overline{u_n} \, \d x 
- \re   \int_{\Gamma_0} \partial_\nu u_n [2\tilde{h}\cdot \nabla \overline{u_n}] \, \d \sigma \\  - \re  \int_\Omega f_n [2\tilde{h}\cdot \nabla \overline{u_n} ]  \, \d x 
+ \int_\Omega \{ \omega_n^2 |u_n|^2 - \| \nabla u_n\|^2 \} \operatorname{div} \tilde{h} \, \mathrm{d}x,
\end{multline}
where $J_{\tilde{h}} = [\partial_j \tilde{h}_i]_{ij}$ is the Jacobian matrix of $\tilde{h}$. 
For the construction of $\tilde{h}$ or computations leading to \cref{eq:trace}, the reader is referred to  \cite[Lemmas 2.1 and 2.3]{komornik_exact_1994}.
Furthermore, since $\tilde{h} = \nu$ on $\Gamma_0$ and $\nu \cdot \nabla \overline{u_n} = \partial_\nu \overline{u_n}$,
\begin{equation}
\label{eq:h-para}
\re \int_{\Gamma_0} \partial_\nu u_n [2\tilde{h} \cdot \nabla \overline{u_n}] \, \d \sigma = 2 \int_{\Gamma_0} |\partial_\nu u_n|^2 \, \d \sigma.
\end{equation}
Now, recall that $\|X_n\|_\H = 1$. In particular,  $u_n$ and $v_n$ are bounded in $V$ and $H$ respectively, which implies:
\begin{itemize}
\item  $\nabla u_n$ and $\nabla_\Gamma u_n$  are bounded in $L^2(\Omega)^d$ and $L^2(\Gamma_0)^{d-1}$ respectively;
\item By \cref{eq:v-u}-\cref{eq:v-u-gamma}, $\on u_n$ is bounded in $L^2(\Omega)$ and $\on u_{n|\Gamma_0}$ is bounded in $L^2(\Gamma_0)$.
\end{itemize}
Therefore, it follows from \cref{eq:tan-riem}, \cref{eq:trace} and \cref{eq:h-para} that
\begin{equation}
\label{eq:est-coupling}
\int_{\Gamma_0} |\partial_\nu u_n|^2 \, \mathrm{d}\sigma = O(1).
\end{equation}

\textbf{Step 4: Multiplier identity.} Let $\eps > 0$ to be fixed later on and define
$
\M\overline{u_n} \triangleq 2h\cdot \nabla \overline{u_n} + (\dive h - \eps) \overline{u_n}
$,
where the vector field $h$ is defined in the hypotheses of Theorem \ref{th:pdr} and $\dive$ stands for the divergence. The multiplier identity
\begin{multline}
\label{eq:wave-mult}
2 \operatorname{Re} \int_\Omega [J_h \nabla u_n] \cdot \nabla \overline{u_n} \, \mathrm{d}x + \epsilon \int_\Omega \omega_n^2 |u_n|^2 - \|\nabla u_n\|^2 \, \mathrm{d}x \\ 
=  (1 - \eps) \operatorname{Re} \int_{\Gamma} \partial_\nu u_n \overline{u_n} \operatorname{div}h \, \mathrm{d}\sigma + \operatorname{Re} \int_{\Gamma} \partial_\nu u_n [2 h\cdot \nabla \overline{u_n}] \, \mathrm{d}\sigma \\ + \int_{\Gamma} (h\cdot \nu)  \{ \omega_n^2 |u_n|^2 - \| \nabla u_n\|^2\} \, \mathrm{d}\sigma +  \operatorname{Re} \int_\Omega f_n \mathcal{M}\overline{u_n} \, \mathrm{d}x.
\end{multline}
is standardly obtained by multiplying \cref{eq:wave} by $\M \overline{u_n}$, integrating over $\Omega$, and performing a series of integrations by parts -- see, e.g., the proof of \cite[Theorem 4.1]{lasiecka_uniform_1992} for similar computations.
Recalling Item \ref{it:jacobian} in the hypotheses, we choose $\eps < 2\rho$ to obtain
\begin{equation}
\label{eq:wave-mult-bis}
0 \leq (2 \rho - \eps) \int_{\Omega} \|\nabla u_n\|^2 \, \mathrm{d}x + \eps \int_\Omega \omega_n^2 |u_n|^2 \, \mathrm{d}x   \leq \mbox{Right-hand side of \cref{eq:wave-mult}.}
\end{equation}
In what follows, $\eta$ denotes an arbitrary number taken in $(0, 1)$. Since $\| \on u_n \|_H = O(1)$, we have
\begin{equation}
\label{eq:est-un}
\|u_n\|_{H} = o(\on^{\eta - 1}).
\end{equation}

\textbf{Step 5: Estimates on the boundary.} 
We start with the integrals on $\Gamma_0$.
{ First, $h\cdot \nu$ is smooth, so that $(h\cdot \nu)  \overline{u_n}$ belongs to $H^1(\Gamma_0)$ with} 
$
 \nabla_\Gamma [(h\cdot\nu)\overline{u_n}] = (h\cdot\nu) \nabla_\Gamma \overline{u_n} + \overline{u_n} \nabla_\Gamma [h\cdot \nu]
$.
Thus,  multiplying \cref{eq:wave-gamma} by $(h\cdot \nu) \overline{u_n}$, integrating over $\Gamma_0$, and using the Green formula \cref{eq:div-manifold} leads to
\begin{multline}
\label{eq:elliptic-gamma0}
\int_{\Gamma_0} (h \cdot \nu) \{ \|\nabla_\Gamma u_n\|^2 -  \omega_n^2 |u_n|^2\} \, \mathrm{d}\sigma  = \int_{\Gamma_0} (h \cdot \nu)  g_n \overline{u_n} \, \mathrm{d}\sigma \\ - \int_{\Gamma_0} (\nabla_\Gamma u_n \cdot \nabla_\Gamma [h\cdot \nu]) \overline{u_n} \, \d \sigma  - \int_{\Gamma_0} (h \cdot \nu)\partial_\nu u_n \overline{u_n} \, \mathrm{d}\sigma.
\end{multline}
Using a series of Cauchy-Schwarz inequalities, we deduce from \cref{eq:est-rhs}, \cref{eq:est-coupling}, \cref{eq:est-un}, and \cref{eq:elliptic-gamma0} that
\begin{equation}
\label{eq:est-gamma0}
\int_{\Gamma_0} (h \cdot \nu) \{ \|\nabla_\Gamma u_n\|^2 -  \omega_n^2 |u_n|^2\} \, \mathrm{d}\sigma  = o(\on^{\eta -1}).
\end{equation}
In view of \cref{eq:wave-mult}, we also note that because $h = (h\cdot \nu) \nu$ on $\Gamma_0$ (Item \ref{it:gamma0} in the hypotheses of Theorem \ref{th:pdr}), we have
\begin{equation}
\label{eq:tangential-vanish}
\re \int_{\Gamma_0} \partial_\nu u_n [2h\cdot \nabla \overline{u_n}] \, \d \sigma = 2\int_{\Gamma_0} (h\cdot \nu) |\partial_\nu u_n|^2 \, \d \sigma.
\end{equation}

Now we deal with the integrals on $\Gamma_1$ that appear in \cref{eq:wave-mult}. By using that $(h\cdot \nu) \geq m > 0$ on $\Gamma_1$ (Item \ref{it:gamma1}) together with Cauchy-Schwarz and Young inequalities, we get
\begin{multline}
\label{eq:cs-young-gamma1}
\int_{\Gamma_1} (h\cdot \nu)  \{ \omega_n^2 |u_n|^2 - \| \nabla u_n\|^2\} \, \mathrm{d}\sigma   + \operatorname{Re} \int_{\Gamma_1} \partial_\nu u_n [2 h\cdot \nabla \overline{u_n}] \, \mathrm{d}\sigma  \\ \leq  \int_{\Gamma_1} (h\cdot \nu)  \omega_n^2 |u_n|^2  + \frac{1}{2m} \int_{\Gamma_1} |\partial_\nu u_n|^2 \, \d \sigma.
\end{multline}

\textbf{Step 6: Estimate of the interior energy.} Bearing in mind the sign conditions prescribed for $(h \cdot \nu)$ on each part of $\Gamma$, we combine \cref{eq:wave-mult}, \cref{eq:wave-mult-bis}, \cref{eq:tangential-vanish}, and \cref{eq:cs-young-gamma1} to obtain
\begin{multline}
\label{eq:wave-mult-ter}
 (2 \rho - \eps) \int_{\Omega} \|\nabla u_n\|^2 \, \mathrm{d}x + \eps \int_\Omega \omega_n^2 |u_n|^2 \, \mathrm{d}x  \leq (1 - \eps) \operatorname{Re} \int_{\Gamma} \partial_\nu u_n \overline{u_n} \operatorname{div}h \, \mathrm{d}\sigma  \\ + \operatorname{Re} \int_\Omega f_n \mathcal{M}\overline{u_n} \, \mathrm{d}x  + \int_{\Gamma_0} (h\cdot \nu)  \{ \omega_n^2 |u_n|^2 - \| \nabla u_n\|^2\} \, \mathrm{d}\sigma \\
+  \int_{\Gamma_1} (h\cdot \nu)  \omega_n^2 |u_n|^2  + \frac{1}{2m} \int_{\Gamma_1} |\partial_\nu u_n|^2 \, \d \sigma.
\end{multline}
Next, we deduce from \cref{eq:wave-mult-ter} combined with \cref{eq:est-rhs}, \cref{eq:est-coupling}, \cref{eq:est-un}, and \cref{eq:est-gamma0} that
\begin{equation}
\label{eq:est-interior}
\int_\Omega \|\nabla u_n\|^2 + \on^2 |u_n|^2 \, \d x = o(\on^{\eta - 1}).
\end{equation}

\textbf{Step 7: Refined estimate of the coupling term.} We can now use \cref{eq:est-interior} to improve our prior estimate \cref{eq:est-coupling}. We come back to \cref{eq:trace} and \cref{eq:h-para}:
\begin{multline}
\label{eq:id-ref}
\int_{\Gamma_0} \omega_n^2 |u_n|^2 - \|\nabla_{\Gamma} u_n\|^2 + |\partial_\nu u_n|^2 \, \mathrm{d} \sigma    =   \int_\Omega \{ \omega_n^2 |u_n|^2 - \| \nabla u_n\|^2  \} \operatorname{div} \tilde{h} \, \mathrm{d}x \\  - \re  \int_\Omega f_n [2\tilde{h}\cdot \nabla \overline{u_n} ]  \, \d x 
+ 2 \re \int_\Omega [J_{\tilde{h}}\nabla u_n] \cdot  \nabla  \overline{u_n} \, \d x.
\end{multline}
As in Step 5, we obtain another expression of the integral over $\Gamma_0$ of $\on^2 |u_n|^2 - \|\nabla_\Gamma u_n\|^2$ by multiplying \cref{eq:wave-gamma} by $\overline{u_n}$ and integrating over $\Gamma_0$. Then, \cref{eq:id-ref} yields
\begin{multline}
\label{eq:normal-der}
\int_{\Gamma_0} |\partial_\nu u_n|^2 \, \d \sigma =   \int_\Omega \{  \omega_n^2 |u_n|^2 - \| \nabla u_n\|^2  \} \operatorname{div} \tilde{h} \, \mathrm{d}x   - \re  \int_\Omega f_n [2\tilde{h}\cdot \nabla \overline{u_n} ]  \, \d x   \\ + 2 \re \int_\Omega [J_{\tilde{h}}\nabla u_n] \cdot  \nabla  \overline{u_n} \, \d x  + \int_{\Gamma_0} g_n \overline{u_n} \, \d \sigma - \int_{\Gamma_0} \partial_\nu u_n \overline{u_n} \, \d \sigma.
\end{multline}
It follows from \cref{eq:est-rhs}, \cref{eq:est-un}, \cref{eq:est-interior}, and \cref{eq:normal-der} that
\begin{equation}
\label{eq:coupling-bis}
\int_{\Gamma_0} |\partial_\nu u_n|^2 \, \d \sigma = o(\on^{\eta - 1}).
\end{equation}

\textbf{Step 8: Conclusion.} Conversely, we can now use \cref{eq:coupling-bis} to refine the estimate \cref{eq:est-interior} of the interior energy. More precisely, using Cauchy-Schwarz inequality, we infer from \cref{eq:est-un} and \cref{eq:coupling-bis} that
\begin{equation}
\label{eq:refined-coupling}
\re \int_{\Gamma_0} \partial_\nu u_n \overline{u_n} \dive h = o(\on^{3(\eta - 1)/2})
\end{equation}
for any $\eta \in (0, 1)$. We let $\eta = 1/3$; then, by plugging \cref{eq:refined-coupling} into \cref{eq:wave-mult-ter} we finally obtain (compare with \cref{eq:est-interior})
\begin{equation}
\label{eq:est-interior-bis}
\int_\Omega \|\nabla u_n\|^2 + \on^2 |u_n|^2 \, \d x = o(\on^{-1}).
\end{equation}
We are now in position to conclude. We recall that the trace operator is continuous from $H^{1/2}(\Omega)$ into $L^2(\Gamma_0)$ and use linear interpolation between Sobolev spaces:
\begin{equation}
\label{eq:interp}
\begin{aligned}
\int_{\Gamma_0} \omega_n^2 |u_n|^2 \, \d \sigma& \leq K \omega_n^2 \|u_n\|^2_{H^{1/2}(\Omega)} \\
& \leq K' \omega_n \|u_n\|_{H^{1}(\Omega)}  \| \on u_n\|_{L^2(\Omega)}.
\end{aligned}
\end{equation}
By \cref{eq:est-interior-bis}, we have
\begin{equation}
\|u_n\|_{H^1(\Omega)} = o(\on^{-1/2}), \quad  \| \on u_n\|_{L^2(\Omega)} = o(\on^{-1/2}).
\end{equation}
Therefore, \cref{eq:interp} yields 
\begin{equation}
\int_{\Gamma_0} \on^2 |u_n|^2 \, \d \sigma = o(1)
\end{equation}
In sum, after multiplying \cref{eq:wave-gamma} by $\overline{u_n}$,
we finally obtain
\begin{multline}
\int_{\Omega} \omega_n^2 |u_n|^2 + \| \nabla u_n \|^2 \, \mathrm{d}x  + \int_{\Gamma_0} \omega_n^2 |u_n|^2 + \|\nabla_\Gamma u_n\|^2 \, \mathrm{d}\sigma  + \int_{\Gamma_1} |u_n|^2 \, \mathrm{d}\sigma = o(1),
\end{multline}
which contradicts $\|X_n\|_\mathcal{H} = 1$.
\end{proof}


\section*{Acknowledgements}

This work has been partially supported by MIAI@Grenoble Alpes (ANR-19-P3IA-0003).

\bibliographystyle{alpha}
\bibliography{cpde}

\begin{thebibliography}{{Tay}11}

\bibitem[{Ala}02]{alabau_indirect}
Fatiha {Alabau-Boussouira}.
\newblock {Indirect boundary stabilization of weakly coupled hyperbolic
  systems}.
\newblock {\em {SIAM Journal on Control and Optimization}}, 41(2):511--541,
  2002.

\bibitem[BD08]{batty-non-uniform}
Charles Batty and Thomas Duyckaerts.
\newblock Non-uniform stability for bounded semi-groups on {Banach} spaces.
\newblock {\em Journal of Evolution Equations}, 8(4):765--780, 2008.

\bibitem[BT10]{borichev_polynomial}
Alexander {Borichev} and Yuri {Tomilov}.
\newblock {Optimal polynomial decay of functions and operator semigroups}.
\newblock {\em {Mathematische Annalen}}, 347(2):455--478, 2010.

\bibitem[CST20]{chill-semi}
Ralph Chill, David Seifert, and Yuri Tomilov.
\newblock Semi-uniform stability of operator semigroups and energy decay of
  damped waves.
\newblock {\em Philosophical Transactions of the Royal Society A},
  378(2185):20190614, 2020.

\bibitem[GL14]{graber_analicity}
Philip~Jameson Graber and Irena Lasiecka.
\newblock Analyticity and {Gevrey} class regularity for a strongly damped wave
  equation with hyperbolic dynamic boundary conditions.
\newblock In {\em Semigroup Forum}, volume~88, pages 333--365. Springer, 2014.

\bibitem[GP10]{guillemin-topology}
Victor {Guillemin} and Alan {Pollack}.
\newblock {\em {Differential topology}}.
\newblock AMS Chelsea Publishing, 2010.

\bibitem[JS21]{jacob_stability}
Birgit Jacob and Nathanael Skrepek.
\newblock Stability of the multidimensional wave equation in port-{Hamiltonian}
  modelling.
\newblock In {\em 2021 60th IEEE Conference on Decision and Control (CDC)},
  pages 6188--6193, 2021.

\bibitem[Kom94]{komornik_exact_1994}
Vilmos Komornik.
\newblock {\em Exact controllability and stabilization: the multiplier method},
  volume~36.
\newblock Wiley, 1994.

\bibitem[Li21]{li_asymptotics}
Chan Li.
\newblock Asymptotics for wave equations with damping only on the dynamical
  boundary.
\newblock {\em Applied Mathematics and Optimization}, 84:1--16, 12 2021.

\bibitem[LL98]{liu1998spectral}
Bo~Liu and Walter Littman.
\newblock On the spectral properties and stabilization of acoustic flow.
\newblock {\em SIAM Journal on Applied Mathematics}, 59(1):17--34, 1998.

\bibitem[LM68]{lions-problemes}
Jacques-Louis Lions and Enrico Magenes.
\newblock {\em Probl\`emes aux limites non homog\`enes et applications},
  volume~1.
\newblock Dunod, 1968.

\bibitem[LR07]{liu_frequency_2007}
Zhuangyi Liu and Bopeng Rao.
\newblock Frequency domain approach for the polynomial stability of a system of
  partially damped wave equations.
\newblock {\em Journal of Mathematical Analysis and Applications},
  335(2):860--881, November 2007.

\bibitem[LT92]{lasiecka_uniform_1992}
Irena Lasiecka and Roberto Triggiani.
\newblock Uniform stabilization of the wave equation with {Dirichlet} or
  {Neumann} feedback control without geometrical conditions.
\newblock {\em Applied Mathematics and Optimization}, 25(2):189--224, March
  1992.

\bibitem[{Tay}11]{taylor-pde}
Michael~Eugene {Taylor}.
\newblock {\em {Partial differential equations I. Basic theory}}, volume 115.
\newblock Springer, 2011.

\bibitem[Vit17]{vitillaro2017wave}
Enzo Vitillaro.
\newblock On the wave equation with hyperbolic dynamical boundary conditions,
  interior and boundary damping and source.
\newblock {\em Archive for Rational Mechanics and Analysis}, 223(3):1183--1237,
  2017.

\end{thebibliography}

\end{document}